\newcommand{\n}{\noindent}
\newcommand{\bb}[1]{\mathbb{#1}}
\newcommand{\cl}[1]{\mathcal{#1}}
\newcommand{\vp}{\varepsilon}
\theoremstyle{plain}
\newtheorem{thm}{Theorem}
\newtheorem*{cor}{Corollary}
\theoremstyle{definition}
\newtheorem{dfn}{Definition}
\newtheorem{prb}{Problem}
\newtheorem{exm}{Example}
\theoremstyle{remark}
\newtheorem*{rk}{Remark}
\begin{document}

\title{Random Series of Trace Class Operators\thanks{In \emph{Proceedings IV CLAPEM} (Mexico City, September 1990).}}

\author{by\\ Gilles Pisier\\
Texas A\&M University\\
College Station, TX 77843, U. S. A.\\
and\\
Universit\'e Paris VI\\
Equipe d'Analyse, Case 186, 75252\\
Paris Cedex 05, France}

\date{}
\maketitle

In this lecture, we would like to present some recent results on Gaussian (or Rademacher) random series of trace class operators, (\emph{cf.}\ mainly \cite{LPP}). We will emphasize the probabilistic reformulation of the results of \cite{LPP}, as well as the open problems suggested by these results. We will start by a survey of what is known about the following problems.

Let $B$ be a Banach space. Let $(x_n)$ be a sequence of elements of $B$. Let $(g_n)$ be a sequence of i.i.d.\ Gaussian random variables.

\begin{prb}\label{rans1prb}
Find a necessary and sufficient condition on the sequence $(x_n)$ for the a.s.\ convergence in norm of the series
\begin{equation}\label{rans1eq}
 \sum^\infty_{n=1} g_nx_n.
\end{equation}
\end{prb}

A similar question can be raised for series of the form $\sum \pm x_n$, where the signs are chosen at random. More precisely, let $(\vp_n)$ be a sequence of i.i.d.\ $\pm1$-value random variables such that $P(\vp_n=+1) = P(\vp_n=-1)=1/2$. We can ask

\begin{prb}\label{rans2prb}
Find a necessary and sufficient on condition $(x_n)$ for the a.s.\ convergence of the series
\begin{equation}\label{rans2eq}
 \sum^\infty_{n=1} \vp_nx_n.
\end{equation}
\end{prb}

Let $S_n = \sum\limits_{k\le n} g_kx_k$ and $R_n = \sum\limits_{k\le n} \vp_kx_k$. Let $0\le p\le \infty$. We will say that the sequence of partial sums $\{S_n\}$ converges in $L_p(B)$ if there is a Bochner-measurable $B$-valued random variable $S_\infty$ such that $\|S_n-S_\infty\|\to 0$ in $L_p$. For $p=0$, this corresponds to the convergence in probability of the series \eqref{rans1eq}. (Recall that the definition of a Bochner-measurable random variable is equivalent to requiring that the variable essentially takes its values in a separable subspace of $B$. Equivalently, the image probability measure on $B$ is a Radon measure.)

The following result is well known. In the Gaussian case, it goes back to \cite{F} and \cite{LS}, while the Rademacher case goes back to \cite{K} and \cite{Kw1} (\emph{cf.}\ also \cite{IN} for the case $p=0$).

\begin{thm}\label{rans1thm}
The following assertions are equivalent:
\begin{itemize}
\item[\rm (i)] The sequence of partial sums $\{S_n\}$ converges a.s.\ in the norm of $B$.
\item[\rm (ii)] The sequence $\{S_n\}$ converges in $L_p(B)$ for some $0\le p<\infty$.
\item[\rm (iii)] The sequence $\{S_n\}$ converges in $L_p(B)$ for all $0\le p<\infty$.
\end{itemize}

The same equivalence holds with the partial sums $\{R_n\}$ of the series \eqref{rans2eq} instead of the series \eqref{rans1eq}. Moreover, when the series \eqref{rans1eq} or \eqref{rans2eq} is convergent a.s.\ to a limit denoted by $S$, necessarily there is a $\delta>0$ such that
\[
 \int \exp(\delta\|S\|^2) < \infty.
\]
\end{thm}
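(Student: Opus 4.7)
The plan is to establish the cycle (iii) $\Rightarrow$ (ii) $\Rightarrow$ (i) $\Rightarrow$ (exponential integrability) $\Rightarrow$ (iii); the final tail estimate then drops out of the third step for free. The implication (iii) $\Rightarrow$ (ii) is trivial, and throughout one may replace $B$ by the separable closed linear span of $\{x_n\}$, so Bochner measurability becomes automatic and all limits in probability are Radon.

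For (ii) $\Rightarrow$ (i), I would invoke the It\^o--Nisio theorem, which asserts that for a sum of independent symmetric $B$-valued random variables convergence in probability is equivalent to a.s.\ convergence. Since convergence in $L_p(B)$ for any $0\le p<\infty$ implies convergence in probability, a.s.\ convergence follows. This is the step (and essentially the only step) where the symmetry of $g_n$ or $\vp_n$ is really used.

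For (i) $\Rightarrow$ exponential integrability, the starting point is L\'evy's maximal inequality for sums of independent symmetric summands,
\[
 P\bigl(\max_{k\le n}\|S_k\|>t\bigr)\le 2P(\|S_n\|>t),
\]
which, upon letting $n\to\infty$, yields $P(\sup_n\|S_n\|>t)\le 2P(\|S\|>t)$. In the Gaussian case, $S$ is a Gaussian Radon variable on $B$, so Fernique's theorem immediately supplies a $\delta>0$ with $\int\exp(\delta\|S\|^2)<\infty$. In the Rademacher case, Kahane's inequality plays the analogous role: one fixes $t_0$ so that $a=P(\sup_n\|S_n\|>t_0)$ is small, and then exploits the independence of the increments $S_{n+m}-S_n$ from $S_n$ together with iterated L\'evy estimates to obtain sub-Gaussian tail decay for $\sup_n\|S_n\|$, which in particular yields a finite exponential-square moment.

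The remaining implication, exponential integrability $\Rightarrow$ (iii), is purely soft: $\sup_n\|S_n\|$ lies in every $L_p$ by the tail bound above, the inequality $\|S_n-S\|\le 2\sup_k\|S_k\|$ provides a uniform integrable majorant, and dominated convergence against the a.s.\ convergence $S_n\to S$ produces $L_p$-convergence for every $0\le p<\infty$. I expect the Rademacher exponential bound to be the main obstacle: Fernique's theorem is available off the shelf in the Gaussian setting, whereas in the Rademacher setting one must supply the iterative L\'evy--Kahane concentration argument by hand; everything else in the proof is essentially formal.
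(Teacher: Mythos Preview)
The paper does not actually supply a proof of this theorem: it is stated as ``well known'' and attributed to Fernique \cite{F} and Landau--Shepp \cite{LS} in the Gaussian case, and to Kahane \cite{K} and Kwapie\'n \cite{Kw1} (with \cite{IN} for $p=0$) in the Rademacher case. So there is no in-paper argument to compare against.

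Your sketch is correct and is precisely the standard route one finds in those references: It\^o--Nisio handles (ii) $\Rightarrow$ (i), Fernique's theorem gives the exponential-square integrability in the Gaussian case, the iterated L\'evy/Kahane concentration argument gives sub-Gaussian tails in the Rademacher case, and dominated convergence closes the loop back to (iii). Your assessment of where the work lies is also accurate: Fernique is available off the shelf, while the Rademacher sub-Gaussian bound (Kahane--Kwapie\'n) requires the iterative argument you outline. One small clarification worth keeping straight when you write it up: what you invoke in the Rademacher case is not the Kahane--Khintchine moment comparison per se, but rather Kahane's tail-iteration lemma (if $P(\|S\|>t_0)\le 1/4$ then $P(\|S\|>kt_0)$ decays like $4^{-k(k-1)/2}$, roughly), which is what actually produces the sub-Gaussian decay needed for $\int\exp(\delta\|S\|^2)<\infty$.
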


\begin{cor}
For all $0<p<q<\infty$, the norms induced by $L_p(B)$ and $L_q(B)$ on the set of all a.s.\ convergent series of the form \eqref{rans1eq} $($resp.\ \eqref{rans2eq}$)$ are equivalent. $($In particular, they are all equivalent to the norm induced by $L_2(B)$.$)$
\end{cor}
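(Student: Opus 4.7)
The inequality $\|S\|_{L_p(B)} \le \|S\|_{L_q(B)}$ for $0<p<q<\infty$ is immediate from Jensen's inequality applied to $t\mapsto t^{q/p}$ on the underlying probability space, so the content of the corollary is the reverse bound $\|S\|_{L_q(B)} \le C_{p,q}\|S\|_{L_p(B)}$.

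My plan is a closed graph argument resting on Theorem \ref{rans1thm}. Let $\mathcal{C}$ be the vector space of sequences $(x_n)\subset B$ for which \eqref{rans1eq} converges a.s.; by Theorem \ref{rans1thm}, this coincides with the set of $(x_n)$ such that $\{S_n\}$ converges in $L_r(B)$ for any (equivalently, every) $r\in[0,\infty)$. For $r\in(0,\infty)$ put $N_r((x_n)) := \|\lim_n S_n\|_{L_r(B)}$. One checks that $(\mathcal{C},N_r)$ is a quasi-Banach space (and a Banach space for $r\ge 1$): an $N_r$-Cauchy sequence of coefficient sequences produces an $L_r(B)$-Cauchy sequence of partial sums, whose limit is, after passing to an a.s.-convergent subsequence, itself an a.s.-convergent random series whose coefficients $x_n$ are recovered coordinatewise. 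The identity $(\mathcal{C},N_p)\to(\mathcal{C},N_q)$ is everywhere defined by Theorem \ref{rans1thm} and has closed graph, since any pair of $L_p$- and $L_q$-limits of a common sequence must agree along a common a.s.-convergent subsequence. The closed graph theorem (in its quasi-Banach form when $\min(p,q)<1$) then furnishes a constant $C_{p,q}$ with $N_q \le C_{p,q} N_p$.

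A more quantitative alternative is to exploit the exponential integrability $\int \exp(\delta\|S\|^2)<\infty$ at the end of Theorem \ref{rans1thm}: a refinement of its proof (Fernique--Landau--Shepp in the Gaussian case, the symmetric tail-iteration argument in the Rademacher case) in fact yields $\delta \ge c(p)/\|S\|_{L_p}^2$, whence $\|S\|_{L_q}\le C_{p,q}\|S\|_{L_p}$ on integrating the resulting sub-Gaussian tail. The only real obstacle I anticipate is keeping track of the quasi-Banach setting when $p<1$, which demands a little care in either approach; the Rademacher case is handled identically, with $R_n$ replacing $S_n$ throughout.
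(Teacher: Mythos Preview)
The paper does not supply a proof of this Corollary; it is simply stated as a direct consequence of Theorem~\ref{rans1thm} and the references \cite{F,LS,K,Kw1} cited there. Both of your routes are correct and standard. Your second approach --- upgrading the exponential integrability in Theorem~\ref{rans1thm} to $\delta \asymp \|S\|_{L_p}^{-2}$ and integrating the resulting sub-Gaussian tail --- is precisely the content of the Kahane and Fernique--Landau--Shepp results that the paper invokes, and is almost certainly what the author has in mind; it also yields explicit constants $C_{p,q}$. Your closed-graph route works as well (the closed graph theorem holds for $F$-spaces, which covers the quasi-Banach case $p<1$). The one point you should pin down more carefully there is the completeness of $(\mathcal{C},N_r)$: this amounts to checking that an $L_r(B)$-limit of finite sums $\sum g_n x_n$ is again an a.s.-convergent series of the same form. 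The paper asserts exactly this for $r=2$ in the paragraph immediately following the Corollary (defining $G(B)$ and $R(B)$ as closed subspaces of $L_2(B)$), and the other values of $r$ then follow from Theorem~\ref{rans1thm}. This soft argument gives no control on the constants, but it is a perfectly legitimate alternative.
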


\begin{rk}
Of course, the preceding result gives an answer to Problem~\ref{rans1prb}: \ the convergence in $L_2(B)$ for instance is necessary and sufficient for the a.s.\ convergence. However, this does not help us (except in the few simple cases below) since we usually cannot compute the norm in $L_2(B)$ for a general Banach space $B$. We are looking for a condition \emph{as simple as possible} characterizing the a.s.\ convergence of \eqref{rans1eq}.
\end{rk}

\begin{rk}
Recently, Talagrand \cite{T} gave a remarkable necessary and sufficient condition for the a.s.\ continuity of Gaussian processes. This can be rephrased as a solution of Problem~\ref{rans1prb} in full generality, as follows. Let $K$ be the closed unit ball of the dual $B^*$ equipped with the topology $\sigma(B^*,B)$ for which it is compact. The series \eqref{rans1eq} converges a.s.\ in $B$ iff the ``majorizing measure condition'' holds. The latter means that there is a positive finite measure $m$ on $K$ such that if we set $\forall t$, $s\in K$
\[
 d(t,s) = \left(\sum |\langle x_n,t-s\rangle|^2\right)^{1/2}
\]
and $\forall \vp>0$ $B(t,\vp)=\{s\in K\mid d(t,s)<\vp\}$ then the condition can be stated as
\[
 \lim_{\delta\to 0} \sup_{t\in K} \int^\delta_0 \left(\text{Log} \frac1{m(B(t,\vp))}\right)^{1/2} d\vp = 0.
\]
This completely solves the problem when we do not have any extra information on the Banach space $B$. However, as the discussion below will show, the spaces $B$ which arise in analysis are often given with an additional structure (for instance a function space, or an operator space $\ldots$) which allows to find a simple very explicit necessary and sufficient condition. For instance, it is not easy at all (although Talagrand did do it) to deduce the condition in Example~\ref{rans1exm} below (the Hilbert space case) from the majorizing measure condition. In this lecture we want to concentrate on Banach spaces $B$ which are as ``concrete'' as possible. This is somehow the other end of the spectrum from Talagrand's result which completely answers (and quite remarkably so) the ``abstract'' case.
\end{rk}

One interesting feature of Theorem~\ref{rans1thm} is that it allows us to define the Banach space of all a.s.\ convergent series of the form \eqref{rans1eq} or \eqref{rans2eq}. Indeed, given a fixed sequence $(g_n)$ of i.i.d.\ Gaussian normal variables on a probability space $(\Omega, {\cl A}, {\bb P})$, we define the space $G(B)$ as the subspace of $L_2(\Omega, {\cl A}, {\bb P}; B)$ formed by all the convergent series of the form \eqref{rans1eq}, and we equip it with the norm induced by $L_2(B)$. Similarly, we define the space $R(B)$ as the subspace of all convergent series of the form \eqref{rans2eq} in $L_2(B)$.

It is not hard to show that $G(B)$ (resp.\ $R(B)$) is a Banach space which coincides with the closure in $L_2(B)$ of all the \emph{finite} sums of the form \eqref{rans1eq} (resp.\ \eqref{rans2eq}).

By Theorem~\ref{rans1thm}, our Problems~\ref{rans1prb} and \ref{rans2prb} are the same as finding a ``simple'' description of the Banach spaces $G(B)$ and $R(B)$.

Let us first review several known cases to illustrate what we mean by a ``simple'' characterization.

\begin{exm}\label{rans1exm}
If $B$ is a Hilbert space, it is well known that \eqref{rans1eq} [resp.\ \eqref{rans2eq}] converges a.s.\ iff
\[
 \sum \|x_n\|^2 <\infty.
\]
Moreover, $G(B)$ (resp.\ $R(B)$) can be identified with $\ell_2(B)$ with equivalent norms.
\end{exm}

\begin{exm}\label{rans2exm}
If $B=L_p(S,\Sigma,\mu)$ for some measure space $(S,\Sigma,\mu)$ and if $1\le p<\infty$ (we insist that $p<\infty$) then the series \eqref{rans1eq} [resp.\ \eqref{rans2eq}] is a.s.\ convergent iff
\[
 \int \left(\sum_n |x_n(s)|^2\right)^{p/2} d\mu(s) < \infty.
\]
Moreover, there is a constant $C$ (depending only on $p$) such that
\[
 \frac1C \left(\int\left(\sum |x_n(s)|^2\right)^{p/2} d\mu(s)\right)^{1/p} \le  \left\|\sum g_nx_n\right\|_{L_2(B)} \le \left(\int\left(\sum |x_n(s)|^2\right)^{p/2} d\mu(s)\right)^{1/p},
\]
and similarly with the series \eqref{rans2eq} instead of \eqref{rans1eq}.
\end{exm}

In particular, if $B=L_p(\mu)$, the spaces $G(B)$ and $R(B)$ can be identified with the space $L_p(\mu;\ell_2)$, with equivalent norms. This has been known for a long time, as a consequence of classical inequalities. The ``modern'' way to prove this is to use Fubini's theorem and the Corollary of Theorem~\ref{rans1thm} and to work with the $L_p(B)$ norm on $G(B)$ or $R(B)$ instead of the $L_2(B)$-norm.

The basic examples above have been one of the motivation for the theory of type and cotype of Banach spaces, (\emph{cf.}\ \cite{MP}). A Banach space $B$ is called of type $p$ $(1\le p\le 2)$ if the condition $\sum\|x_n\|^p<\infty$ is sufficient for the a.s.\ convergence of the series \eqref{rans2eq}. Similarly, the space $B$ is called of cotype  $q$ $(2\le q<\infty)$ if the condition $\sum\|x_n\|^q<\infty$ is necessary for the a.s.\ convergence of the series \eqref{rans2eq}.

With this terminology, we find that (by Example~\ref{rans1exm}) if $B$ is isomorphic to a Hilbert space then $B$ is of type 2 and cotype 2. By a well known result of Kwapie\'n \cite{Kw2} the converse is also true, so that only in spaces isomorphic to Hilbert do we have as simple a characterization as in Example~\ref{rans1exm}. From Example~\ref{rans2exm}, it is rather easy to deduce (by H\"older--Minkowski) that $L_p$ is of cotype $\max(p,2)$ and, if $p\ne \infty$, of type $\min(p,2)$.

In our discussion, it is natural to ask for which spaces $B$ the series of the form \eqref{rans1eq} and \eqref{rans2eq} are equivalent in the sense that \eqref{rans1eq} converges a.s.\ iff \eqref{rans2eq} also does. This was answered in \cite{MP} as follows.

\begin{thm}\label{rans2thm}
The following properties of a Banach space $B$ are equivalent:
\begin{itemize}
\item[\rm (i)] The space $B$ is of cotype $q$ for some $q<\infty$.
\item[\rm (ii)] The a.s.\ convergence of a series of the form \eqref{rans1eq} is equivalent to the a.s.\ convergence of the corresponding series of the form \eqref{rans2eq}.
\item[\rm (iii)] The spaces $G(B)$ and $R(B)$ can be naturally identified.
\end{itemize}
\end{thm}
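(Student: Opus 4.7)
My plan is to handle (ii)$\Leftrightarrow$(iii) by soft arguments, to derive (i)$\Rightarrow$(ii) by combining a universal comparison with a cotype-based comparison from \cite{MP}, and to obtain the converse implication via a Maurey--Pisier construction.

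First I would establish (ii)$\Leftrightarrow$(iii). By Theorem~\ref{rans1thm}, $G(B)$ and $R(B)$ are identifiable with the Banach spaces of formal series $\sum x_n$ whose associated Gaussian (resp.\ Rademacher) series converges a.s., so (ii) is precisely the assertion that these two underlying sets of formal series coincide. Granting (ii), the natural map $\sum g_n x_n \mapsto \sum \varepsilon_n x_n$ is then a well-defined bijection $G(B)\to R(B)$, closed as a map into $L_0(\Omega;B)$, and hence a topological isomorphism by the closed graph theorem. The reverse implication is immediate.

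Next I would tackle (i)$\Rightarrow$(ii). The direction ``Gaussian a.s.\ $\Rightarrow$ Rademacher a.s.'' is universal: writing $g_n = \varepsilon_n|g_n|$ with $(\varepsilon_n)$ and $(|g_n|)$ independent and applying Jensen to $y \mapsto \|y\|$ conditionally on $\varepsilon$ gives
\[
 \left\|\sum g_n x_n\right\|_{L_1(B)} \ge E|g_1|\left\|\sum \varepsilon_n x_n\right\|_{L_1(B)},
\]
and Theorem~\ref{rans1thm} upgrades the resulting $L_1$-convergence to a.s.\ convergence. The converse direction is the substantive one and would rely on the comparison
\[
 \left\|\sum g_n x_n\right\|_{L_2(B)} \le C(B)\left\|\sum \varepsilon_n x_n\right\|_{L_2(B)}
\]
proved in \cite{MP} under finite cotype; its proof uses the Maurey--Pisier dichotomy (finite cotype is equivalent to no uniform $\ell_\infty^n$-embedding) to turn the structural hypothesis into a quantitative estimate. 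Granted this inequality, $L_2$-Cauchy Rademacher partial sums yield $L_2$-Cauchy Gaussian partial sums, and Theorem~\ref{rans1thm} concludes.

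Finally I would prove the contrapositive ``not (i) implies not (ii)''. If $B$ has no finite cotype, \cite{MP} provides $(1+\delta)$-embeddings of $\ell_\infty^{N_k}$ into $B$ for arbitrarily large $N_k$; the disparity $E\|\sum_{i\le N}\varepsilon_i e_i\|_{\ell_\infty^N} = 1$ versus $E\|\sum_{i\le N} g_i e_i\|_{\ell_\infty^N} \asymp \sqrt{\log N}$ suggests taking disjoint blocks $(x_n)_{n \in I_k}$ as embedded images of $(c_k e_i)_{i \le N_k}$ with, say, $c_k = 1/k$ and $N_k = 2^{k^3}$. A direct computation using independence across blocks then yields
\[
 \sum_k \left\|\sum_{n \in I_k} \varepsilon_n x_n\right\|_{L_2(B)}^2 \asymp \sum_k k^{-2} < \infty, \qquad \sum_k \left\|\sum_{n \in I_k} g_n x_n\right\|_{L_2(B)}^2 \asymp \sum_k k^{-2}\log N_k = \infty,
\]
contradicting (ii) via Theorem~\ref{rans1thm}. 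The main obstacle throughout is the cotype-based comparison used in the previous paragraph: it fails in $c_0$ (which has no finite cotype), so any proof must engage with the cotype hypothesis substantively, and the Maurey--Pisier dichotomy is the natural vehicle.
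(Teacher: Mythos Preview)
The paper does not supply its own proof of Theorem~\ref{rans2thm}; the result is simply quoted from \cite{MP}, so there is no argument in the text to compare against. Your sketch follows the standard Maurey--Pisier line (and correctly identifies \cite{MP} as the source of both the comparison inequality and the $\ell_\infty^n$-dichotomy), and the logic of (ii)$\Leftrightarrow$(iii) and (i)$\Rightarrow$(ii) is fine.

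There is, however, a slip in your contrapositive construction. For independent symmetric blocks $S_k=\sum_{n\in I_k}\varepsilon_n x_n$ in a general Banach space $B$, the condition $\sum_k\|S_k\|_{L_2(B)}^2<\infty$ does \emph{not} imply that $\sum_k S_k$ converges in $L_2(B)$: there is no Pythagorean identity outside Hilbert space, and you have no control over how the subspaces $B_k\subset B$ sit relative to one another. With $c_k=1/k$ you therefore cannot conclude that the Rademacher series converges. The fix is immediate: take $c_k$ summable, say $c_k=1/k^2$. Since $\bigl\|\sum_{i\le N}\varepsilon_i e_i\bigr\|_{\ell_\infty^N}=1$ deterministically, each block satisfies $\|S_k\|_B\le(1+\delta)c_k$ almost surely, so $\sum_k S_k$ converges absolutely in $B$; meanwhile choosing $N_k$ with $c_k\sqrt{\log N_k}\to\infty$ (e.g.\ $N_k=2^{k^5}$) still forces the Gaussian block norms to blow up, so the Gaussian series diverges. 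Alternatively, you can bypass the explicit gluing: the $\ell_\infty^n$-embeddings already show that the bounded inclusion $G(B)\hookrightarrow R(B)$ fails to be an isomorphism on finite sums, hence cannot be onto by the open mapping theorem, which yields not~(ii) directly.
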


This is also equivalent to the nonexistence in $B$ of a constant $\lambda$ for which there exists a sequence of finite dimensional subspaces $B_n\subset B$ with $B_n$ $\lambda$-isomorphic to $\ell^n_\infty$. (Note that the natural inclusion $G(B)\subset R(B)$ holds for an arbitrary $B$, it is the converse inclusion which only holds if $B$ has a finite cotype.)

Since all the spaces that we will consider below fall into that category, we will sometimes only state our results for $R(B)$, which is more natural in view of the methods of proof, but the reader should recall that the same results holds for $G(B)$ as well.

\begin{exm}\label{rans3exm}
Let $B$ be a Banach lattice of cotype $q$ for some $q<\infty$. Without loss of generality, we can assume that $B$ is a Banach lattice of functions on some measure space $(S,\Sigma,m)$. Consider again $x_n\in B$. Then the series \eqref{rans2eq} converges a.s.\ if
\begin{equation}\label{rans3eq}
 \left(\sum^\infty_1 |x_n(\cdot)|^2\right)^{1/2} \in B.
\end{equation}
Moreover, there is a constant $C$ such that
\begin{equation}\label{rans4eq}
 \frac1C\left\|\left(\sum |x_n|^2\right)^{12/2}\right\|_B \le \left\|\sum \vp_nx_n\right\|_{L_2(B)} \le C\left\|\left(\sum|x_n|^2\right)^{1/2}\right\|_B.
\end{equation}
This useful result can be found (in a different but equivalent or more general formulation) in the work of Maurey \cite{M}. It gives us an identification of $R(B)$ and $G(B)$ with the space $B(\ell_2)$ of all sequences $(x_n)$ such that \eqref{rans3eq} holds equipped with the norm appearing on the left side of \eqref{rans4eq}. For a more thorough discussion of the many inequalities satisfied by a Banach lattice of finite cotype, (\emph{cf.}\ e.g.\ the first chapter in the book \cite{LT}). In another direction, we refer the reader to \cite{P6,PX} for a class of function spaces which, although they are not Banach lattices, satisfy an inequality analogous to \eqref{rans4eq}.
\end{exm}

Recently, F.~Lust-Piquard has obtained a striking \emph{noncommutative} version of Example~\ref{rans2exm} (and of Example~\ref{rans3exm} but we will not discuss this here, see \cite{LP2}). Although her results are valid in a more general framework, we will state them only for the simplest examples of noncommutative $L_p$-spaces, namely the Schatten $p$-classes $C_p$. For $1\le p<\infty$, we denote by $C_p$ the space of all compact operators $x\colon \ \ell_2\to \ell_2$ such that tr$|x|^p<\infty$, where $|x| = (x^*x)^{1/2}$. We equip this space with the norm
\[
 \|x\|_{C_p} = (\text{tr}|x|^p)^{1/p}.
\]
It is well known that tr$|x|^p = \text{tr}(x^*x)^{p/2} = \text{tr}(xx^*)^{p/2}$ for each $x$ in $C_p$.

We will now consider our main Problems~\ref{rans1prb} and \ref{rans2prb} with $B=C_p$.

The first part (Theorem~\ref{rans3thm}) seems perhaps less surprising at first glance than the second one (Theorem~\ref{rans4thm}), because it appears more similar to the commutative case, but see below for a clarification of the duality between the two cases.

\begin{thm}[\cite{LP1}]\label{rans3thm}
Assume $2\le q<\infty$. Let $B=C_q$ and let $x_n\in B$. Then the series \eqref{rans2eq} $($or \eqref{rans1eq}$)$ converges a.s.\ iff the series $\sum\limits^\infty_1 x^*_nx_n$ and $\sum\limits^\infty_1 x_nx^*_n$ are both convergent in the strong operator topology and satisfy
\begin{equation}\label{rans5eq}
 \text{\rm tr}\left(\sum x^*_nx_n\right)^{q/2} < \infty\quad \text{\rm and}\quad {\rm tr}\left(\sum x_nx^*_n\right)^{q/2} < \infty.
\end{equation}
Moreover, the norm in the space $R(B)$ $($or $G(B))$ is equivalent to the expression
\begin{equation}\label{rans6eq}
 \max\left\{\left\|\left(\sum^\infty_1 x^*_nx_n\right)^{1/2}\right\|_{C_q}, \left\|\left( \sum^\infty_1 x_nx^*_n\right)^{1/2}\right\|_{C_q}\right\}.
\end{equation}
\end{thm}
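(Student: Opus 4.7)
\emph{Plan of proof.} My approach is to derive the theorem from the noncommutative Khintchine inequality of Lust--Piquard, which, for $2\le q<\infty$ and any finite sequence $y_1,\ldots,y_N$ in $C_q$, asserts
\[
 \left(\mathbb{E}\Big\|\sum_{n=1}^N \vp_n y_n\Big\|_{C_q}^q\right)^{1/q} \sim_q M(y_1,\ldots,y_N),
\]
where $M(y_1,\ldots,y_N)$ is the expression appearing in \eqref{rans6eq} applied to the $y_n$'s. I would take this inequality as input; by the Corollary of Theorem~\ref{rans1thm} one is then free to replace the $L_q$-norm on the left by the $L_2$-norm at the price of $q$-dependent constants.

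By Theorem~\ref{rans1thm}, the series \eqref{rans2eq} converges a.s.\ in $C_q$ iff its partial sums $R_N$ form a Cauchy sequence in $L_2(C_q)$. Applied to a tail block $\sum_{M<n\le N}\vp_n x_n$ and using the identity $\|(\sum y_n^*y_n)^{1/2}\|_{C_q}^2 = \|\sum y_n^*y_n\|_{C_{q/2}}$, Khintchine translates this Cauchy property into
\[
 \Big\|\sum_{M<n\le N} x_n^*x_n\Big\|_{C_{q/2}} \longrightarrow 0 \quad\text{and}\quad \Big\|\sum_{M<n\le N} x_n x_n^*\Big\|_{C_{q/2}} \longrightarrow 0
\]
as $M,N\to\infty$. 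Since $C_{q/2}$ is complete, this is equivalent to the convergence in the $C_{q/2}$-norm of both $A_N:=\sum_{n\le N} x_n^* x_n$ and $B_N:=\sum_{n\le N} x_n x_n^*$. Passing to the limit in Khintchine also gives that the norm of $R(C_q)$ is equivalent to \eqref{rans6eq}.

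It remains to identify convergence of $A_N$ in $C_{q/2}$ with strong-operator convergence of $A_N$ together with $\text{tr}(A^{q/2})<\infty$ (and similarly for $B_N$). The forward direction is trivial. For the reverse, assume $A_N\uparrow A$ strongly with $A\in C_{q/2}$, and set $T_N := A-A_N\ge 0$, so $T_N\downarrow 0$ strongly. Since $0\le T_N\le A$ with $A$ compact, each $T_N$ is compact, and by min-max $\lambda_k(T_N)$ is nonincreasing in $N$ and bounded above by $\lambda_k(A)$. Set $\alpha_k := \lim_N\lambda_k(T_N)$. If some $\alpha_k>0$, pick a unit vector $v_N$ in the spectral subspace of $T_N$ on $[\alpha_k,\infty)$ (nonempty, since $\lambda_k(T_N)\ge\alpha_k$) and extract a weakly convergent subsequence $v_N\rightharpoonup v_*$. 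For fixed $M$ and $N\ge M$, the operator inequality $T_N\le T_M$ gives $\langle T_M v_N,v_N\rangle\ge\alpha_k$, and compactness of $T_M^{1/2}$ allows passage to the limit in $N$ to get $\langle T_M v_*,v_*\rangle\ge\alpha_k$; letting $M\to\infty$ contradicts strong convergence $T_M\to 0$. Hence every $\alpha_k=0$, and dominated convergence on $\ell^{q/2}$ (with envelope $(\lambda_k(A))$) yields
\[
 \|T_N\|_{C_{q/2}}^{q/2} = \sum_k \lambda_k(T_N)^{q/2} \longrightarrow 0.
\]

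\emph{Main obstacle.} The genuine difficulty lies entirely in the noncommutative Khintchine inequality itself, which is the technical heart of \cite{LP1} and whose lower bound requires a delicate duality argument. Once that tool is granted, the rest is bookkeeping; the only mildly subtle step is the spectral upgrade just carried out, which uses in an essential way the monotonicity of partial sums of positive operators---strong-operator convergence does not in general imply $C_{q/2}$-convergence even for $C_{q/2}$-bounded sequences.
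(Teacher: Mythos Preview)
The paper does not actually prove Theorem~\ref{rans3thm}; it is stated with a citation to \cite{LP1} and no argument is given in the text, so there is nothing to compare your proof against directly. Your approach---taking the finite-sum noncommutative Khintchine inequality from \cite{LP1} as a black box and then handling the passage to infinite series via Theorem~\ref{rans1thm} and its Corollary---is exactly the natural reading of the statement, and you correctly flag that the substantive content is the inequality of \cite{LP1} itself.

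Your bookkeeping is sound. The only step requiring care is the spectral upgrade showing that $T_N\downarrow 0$ strongly with $0\le T_N\le A\in C_{q/2}$ forces $\|T_N\|_{C_{q/2}}\to 0$, and your argument works: compactness of $T_M^{1/2}$ converts the weak convergence $v_N\rightharpoonup v_*$ into norm convergence of $T_M^{1/2}v_N$, so the contradiction $0=\lim_N\langle T_M v_N,v_N\rangle\ge\alpha_k>0$ goes through even when $v_*=0$ (in which case one does not even need step~9). A shorter route for the case $q=2$ is simply $\|T_N\|_{C_1}=\operatorname{tr}(T_N)=\sum_k\langle T_N e_k,e_k\rangle\to 0$ by dominated convergence in any fixed orthonormal basis, with the envelope $\langle A e_k,e_k\rangle$.
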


\begin{rk}
Here the two conditions appearing in \eqref{rans5eq} are \emph{not} equivalent and the theorem does \emph{not} hold if we drop one condition in \eqref{rans5eq}.
\end{rk}

The second part comes from \cite{LP1} for $1<p<2$ and from \cite{LPP} for $p=1$. Surprisingly, the condition is \emph{different} for the interval [1,2[ than for the interval $]2,\infty[$.

\begin{thm}[\cite{LP1,LPP}]\label{rans4thm}
Assume $1\le p\le 2$. Let $B=C_p$ and let $x_n\in B$. Then the series \eqref{rans2eq} $($or \eqref{rans1eq}$)$ converges a.s.\ iff there is a decomposition $x_n=y_n+z_n$ with $y_n,z_n\in C_p$ such that $\sum\limits^\infty_1 y^*_ny_n$ and $\sum\limits^\infty_1 z_nz^*_n$ converge in the strong operator topology and satisfy
\begin{equation}\label{rans7eq}
\text{\rm tr}\left(\sum y^*_ny_n\right)^{p/2} < \infty\quad \text{\rm and}\quad {\rm tr} \left(\sum z_nz^*_n\right)^{p/2} < \infty.
\end{equation}
Moreover, if we define
\begin{equation}\label{rans8eq}
|||(x_n)|||_p = \inf\left\{\left\|\left(\sum y^*_ny_n\right)^{1/2}\right\|_{C_p} + \left\|\left( \sum z_nz^*_n\right)^{1/2}\right\|_{C_p}\right\}.
\end{equation}
where the infimum runs over all possible decompositions $x_n=y_n+z_n$, then there is a constant $C$ $($depending only on $p)$ such that
\[
 \frac1C |||(x_n)|||_p \le \left\|\sum^\infty_1 \vp_nx_n\right\|_{R(C_p)} \le C|||(x_n)|||_p.
\]
\end{thm}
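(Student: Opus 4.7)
I would split the proof into the upper and lower bounds, and then for the lower bound further split according to whether $1<p\le 2$ or $p=1$.

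\emph{Upper bound.} Given any admissible decomposition $x_n=y_n+z_n$ satisfying \eqref{rans7eq}, the triangle inequality in $R(C_p)$ reduces the claim to the two ``one-sided'' noncommutative Khintchine estimates
\[
 \Bigl\|\sum\vp_n y_n\Bigr\|_{L_2(C_p)}\le C\,\Bigl\|\Bigl(\sum y_n^*y_n\Bigr)^{1/2}\Bigr\|_{C_p},\qquad
 \Bigl\|\sum\vp_n z_n\Bigr\|_{L_2(C_p)}\le C\,\Bigl\|\Bigl(\sum z_nz_n^*\Bigr)^{1/2}\Bigr\|_{C_p},
\]
valid for every $1\le p\le 2$. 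At $p=2$ these are trivial equalities (both sides equal $(\sum\mathrm{tr}(y_n^*y_n))^{1/2}$); for $1\le p<2$ they are obtained by trace-dualizing, against a test sequence $(\xi_n)$ in $C_q$, the matching \emph{lower} bound in \eqref{rans6eq} of Theorem~\ref{rans3thm} for the conjugate exponent $q\in(2,\infty]$. (At $q=\infty$, i.e.\ $p=1$, one invokes Theorem~\ref{rans3thm} at finite $q$ and passes to the limit through a finite-dimensional reduction.) Taking the infimum over all decompositions then yields $\|\sum\vp_n x_n\|_{R(C_p)}\le C\,|||(x_n)|||_p$.

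\emph{Lower bound, case $1<p\le 2$.} Here trace duality closes the argument cleanly. By its very definition $|||(x_n)|||_p$ is the norm of the sum space $A_p+B_p$, where $A_p$ and $B_p$ denote the column and row $C_p$-sequence spaces with norms $\|(\sum y_n^*y_n)^{1/2}\|_{C_p}$ and $\|(\sum z_nz_n^*)^{1/2}\|_{C_p}$ respectively. Under the bilinear pairing
\[
 \bigl\langle(x_n),(\xi_n)\bigr\rangle=\sum_n\mathrm{tr}(x_n^*\xi_n),
\]
one verifies $A_p^*=A_q$ and $B_p^*=B_q$ with $1/p+1/q=1$. The general formula $(A_p+B_p)^*=A_q\cap B_q$ (intersection, with max norm) then identifies the dual norm of $|||\cdot|||_p$ with the max expression \eqref{rans6eq} at the index $q\in[2,\infty)$, which by Theorem~\ref{rans3thm} is equivalent to the norm of $R(C_q)$. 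Since the very same pairing realizes $R(C_p)^*\simeq R(C_q)$ up to universal constants, Hahn-Banach gives the reverse inequality $|||(x_n)|||_p\le C\,\|\sum\vp_n x_n\|_{R(C_p)}$.

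\emph{Lower bound, case $p=1$: the main obstacle.} This is the principal contribution of \cite{LPP} and the technical heart of the theorem. The duality argument above breaks down twice over: the natural dual object would be ``$R(C_\infty)$'', outside the range of Theorem~\ref{rans3thm}, and $C_1$ is not reflexive, so one cannot blindly take weak-$*$ limits of finite-dimensional approximate decompositions. The decomposition $x_n=y_n+z_n$ must therefore be \emph{constructed} directly from the hypothesis that $\|\sum\vp_n x_n\|_{R(C_1)}$ is finite. I would attempt this via a Hahn-Banach separation argument inside a suitable $2\times 2$-matrix ambient model of $C_1$, in which the row/column splitting becomes a genuine subspace decomposition; the failure of reflexivity would then be circumvented by exploiting the predual structure $C_1=C_\infty^*$ and by verifying the strong-operator convergence required by \eqref{rans7eq} by hand. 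Turning a separation statement into an honest decomposition with quantitative control over both $\|(\sum y_n^*y_n)^{1/2}\|_{C_1}$ and $\|(\sum z_nz_n^*)^{1/2}\|_{C_1}$ is the delicate part of the whole proof.
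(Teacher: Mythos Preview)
Your treatment of the upper bound and of the lower bound for $1<p\le 2$ is correct and matches the paper's own remarks: the norm \eqref{rans8eq} is by construction the sum-space norm dual to the intersection norm \eqref{rans6eq}, and since $C_p$ is $K$-convex for $1<p<\infty$ one has $R(C_p)^*\simeq R(C_q)$, so the duality closes exactly as you describe. This is essentially the argument of \cite{LP1}.

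For $p=1$, however, your plan does not match the paper and, as stated, is not a proof. The paper does \emph{not} obtain the decomposition by Hahn--Banach separation in a $2\times 2$ matrix model; it obtains it from an \emph{analytic factorization}. The route is: identify $C_1$ with $\ell_2\widehat\otimes\ell_2$ and prove the more general Theorem~\ref{rans5thm} for $B=E\widehat\otimes F$ with $E,F$ of type~2 and analytic UMD. Given $(x_n)$ with $\sum\vp_n x_n\in R(B)$, one forms the Hardy function $f(t_0,t_1,\ldots)=\sum_{n\ge 0}e^{it_n}x_n$ on the infinite torus $\Delta$, so that $f\in H^1(\Delta,E\widehat\otimes F)$. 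Theorem~\ref{rans6thm} (the vector-valued Sarason/Helson--Lowdenslager factorization, extended in \cite{P3} to type~2 spaces) then writes $f=\sum_m g_m\otimes h_m$ with $g_m\in H^2(\Delta,E)$, $h_m\in H^2(\Delta,F)$ and $\sum\|g_m\|_2\|h_m\|_2\le C\|f\|_1$. Extracting the $n$-th Fourier coefficient via the martingale differences $d_n$ and the Leibniz identity $d_n(g_m\otimes h_m)=d_ng_m\otimes h_m+g_m\otimes d_nh_m$ produces exactly the decomposition \eqref{rans11eq}, and the analytic UMD property of $E$ and $F$ (inequality \eqref{rans10eq}) controls the two pieces in $R(E)\widehat\otimes F$ and $E\widehat\otimes R(F)$ respectively. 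Specializing to $E=F=\ell_2$ gives the $p=1$ case of Theorem~\ref{rans4thm}.

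The paper is explicit that, at the time of writing, ``the only known proof \ldots\ uses the factorization of operator valued analytic functions.'' Your proposed separation argument would need to supply a concrete functional to separate and a mechanism for turning the resulting linear data back into a row/column splitting with $C_1$ control; as written it names neither, so it is a description of the difficulty rather than a plan to overcome it. The factorization route is what actually does the work here.
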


\begin{rk}
To illustrate the preceding result, let us consider the following illuminating special case given in \cite{LP1}. Let $(\vp_{ij})$ be a collection of i.i.d.\ symmetric $\pm1$ valued random variables as before, but this time indexed by ${\bb N}\times {\bb N}$ instead of ${\bb N}$.

Let $A = (a_{ij})$ be an infinite matrix with complex entries. Then, if $2\le q<\infty$, the random matrix $(\vp_{ij},a_{ij})$ is a.s.\ in $C_q$ iff we have both
\[
 \sum_i\left(\sum_j|a_{ij}|^2\right)^{q/2} < \infty \quad \text{and}\quad \sum_j \left(\sum_i |a_{ji}|^2\right)^{q/2} < \infty.
\]
 \end{rk}

Moreover, if $1\le p < 2$, then $(\vp_{ij}a_{ij})$ is a.s.\ in $C_p$ iff $a_{ij}$ can be decomposed as $a_{ij}=b_{ij}+c_{ij}$ with
\[
 \sum_i \left(\sum_j|b_{ij}|^2\right)^{p/2} < \infty\quad \text{and}\quad \sum_j \left(\sum_i |c_{ij}|^2\right)^{p/2} < \infty.
\]

(Note:\ the case $0<p<1$ does not seem to be known.)

\begin{rk}
The norm appearing in \eqref{rans8eq} is equivalent to the dual norm to the norm appearing in \eqref{rans6eq} when $\frac1p+\frac1q=1$. This is a special case of the general duality between the intersection of two spaces and the sum of their duals. However, the reader should be warned that \eqref{rans6eq} and \eqref{rans8eq} are \emph{not} equivalent norms when $p=q$ unless $p=q=2$!
\end{rk}

\begin{rk}
In the preceding examples, when $B=L_p$ or $C_p$, we can observe that if $1<p<\infty$ the dual of $G(B)$ (resp.\ $R(B)$) can be identified with $G(B^*)$ (resp.\ $R(B^*)$). This property has been extensively studied under the name of $K$-convexity. It means equivalently that there is a natural bounded linear projection from $L_2(B)$ onto $G(B)$ or $R(B)$. We refer the reader to \cite{P1,P5} for more details on that property. In particular, we proved (\emph{cf.}\ \cite{P5}) that $B$ is $K$-convex iff $B$ is of type $p$ for some $p>1$ or iff $B$ does not contain a sequence of finite dimensional subspaces uniformly isomorphic to $\ell^n_1$.
\end{rk}

In the case $p=1$, the space $C_1$ is the space of all trace class operators which can also be viewed as a tensor product. Let us recall the definition of the projective tensor product of two Banach spaces. Let $E,F$ be Banach spaces and let $u = \sum\limits^n_{i=1} x_i\otimes y_i\in E \otimes F$ with $x_i\in E$, $y_i\in F$. We define
\[
 \|u\|_\Lambda = \inf \left\{\sum^n_1 \|x_i\|\|y_i\|\right\}
\]
where the infimum runs over all possible representations of $u$ as a finite sum as above. The projective tensor product $E\widehat \otimes  F$ is defined as the completion of $E\otimes F$ for this norm. It is well known that its dual $(E\widehat\otimes F)^*$ can be identified with the space of all bounded bilinear forms on $E\times F$, or with the spaces of all bounded operators either $B(E,F^*)$ (from $E$ into $F^*$) or $B(F,E^*)$ (from $F$ into $E^*$). We will see below that for a certain class of Banach spaces $E$ and $F$, there is a rather simple characterization of the sequences $x_n\in E\widehat\otimes F$ such that the series \eqref{rans1eq} or \eqref{rans2eq} converges a.s.\ in $E\widehat\otimes F$. Quite surprisingly the only known proof (at the moment) of these results uses the factorization of operator valued analytic functions. In the case of $C_1$ this factorization goes back to Sarason \cite{S}, following classical work in the matrix case by Wiener--Masani and Helson--Lowdenslager. Recently, these classical results have been generalized in \cite{P3} to the setting of type 2 Banach spaces (\emph{cf.}\ also \cite{HP,P2,P4}). To explain more clearly the connection with series of independent random variables as in \eqref{rans1eq} and \eqref{rans2eq}, we present our results on the infinite dimensional  torus $\Delta = \pmb{T}^{\bb N}$, equipped with the probability measure $m$ which is the infinite product of copies of the normalized Haar measure on the one dimensional torus $\pmb{T}$. Let $B$ be a Banach space. Consider first a function $F\colon \ \pmb{T}\to B$ which is in $L^1(\pmb{T},dt;B)$. We will say that $F$ is \emph{analytic} if its Fourier transform vanishes on the negative integers.

Let $1\le p\le \infty$.
Now consider a function $f$ in $L_p(\Delta,m;B)$. Let us denote by $E_n$ the conditional expectation operator on $L_p(\Delta,m;B)$ with respect to the $\sigma$-algebra generated by the first $(n+1)$ coordinates $(t_0,t_1,\ldots, t_n)$ on the infinite dimensional torus $\Delta = \pmb{T}^{\bb N}$. We will say that $f$ is  a Hardy function if for each integer $n\ge 0$ the function $E_nf$ is analytic as a function of the last variable $t_n$, more precisely, the function $t\to E_0f(t)$ is analytic and for all $n>0$, for any fixed $(t_0,\ldots, t_{n-1})\in \pmb{T}^n$ the function $t\to E_nf(t_0,\ldots, t_{n-1},t)$ is analytic in the above sense.

We will denote by $H^p(\Delta,B)$ the closed subspace of $L_p(\Delta,m;B)$ formed by all the Hardy functions $f$. This is the same as the set of all Hardy martingales considered in \cite{E} and \cite{G}. These Hardy martingales are a convenient discretization of a certain kind of stochastic integrals which includes analytic functions of the complex Brownian motion, (\emph{cf.}\ e.g. \cite{C,V1,V2}).

We will need the following.

\begin{dfn}
Let $E$ be a Banach space. We will say that $E$ has the analytic UMD property if there is a constant $K$  such that for all $g$ in $H^2(\Delta,E)$ and for all choices of signs $\vp_n = \pm 1$ the series $\sum\limits^\infty_{n=1} \vp_n(E_ng-E_{n-1}g)$ converges in $H^2(\Delta,E)$ and satisfies
\begin{equation}\label{rans9eq}
 \left(\int\left\|\sum \vp_n(E_ng-E_{n-1}g)\right\|^2dm\right)^{1/2} \le K\left(\int \|g\|^2 dm\right)^{1/2}.
\end{equation}
We will denote by $K(E)$ the smallest constant $K$ such that this holds.
\end{dfn}

We observe for further use that if we denote by $\mu$ the normalized uniform probability measure on all the choices of signs $\vp = (\vp_n)_{n\ge 0} \in \{-1,1\}^{\bb N}$, then by averaging \eqref{rans9eq} over all choices of signs we find (denoting $d_ng=E_ng-E_{n-1}g$)
\[
 \iint\left\|\sum \vp_nd_ng(t)\right\|^2 dm(t) d\mu(\vp) \le K^2 \int \|g\|^2 dm,
\]
hence by Fubini and a simple invariance argument, we also have for all points $t\in \pmb{T}^{\bb N}$
\begin{equation}\label{rans10eq}
 \iint\left\|\sum \vp_n e^{-it_n}d_ng(t)\right\|^2 dm(t) d\mu(\vp)\le 4K^2 \int \|g\|^2 dm.
\end{equation}

This property is slightly weaker than the so-called UMD property for which we refer to Burkholder's paper \cite{B}. Here we only require that ``analytic'' martingales (meaning those associated to a Hardy function $g$) are unconditional in $L_2(\Delta,m;B)$. The UMD property corresponds to the same as \eqref{rans9eq} but for arbitrary $E$-valued martingales. We refer to \cite{G} for a discussion of the analytic UMD property. Note that the spaces $L_p$ or $C_p$ are UMD for $1<p<\infty$, and that $L_1$ has the analytic UMD property (see \cite{G}), while $C_1$ fails it (\emph{cf.}\ \cite{HP}).

\begin{rk}
To illustrate the possible applications of Theorems~\ref{rans3thm} and \ref{rans4thm}, let us consider their significance for martaingales with values in $C_q$. Let $(M_n)$ be a martingale with values in a UMD Banach space $B$. Let us denote as usual $dM_n=M_n-M_{n-1}$. We can define the vector valued version of the maximal function and of the square function, as follows
\[
 M^* = \sup_n \|M_n\|\quad \text{and}\quad S(M) = \left(\sup_n \int \left|\sum^n_1 \epsilon_k dM_k\right\|^2 d\mu(\epsilon)\right)^{1/2}.
\]
\end{rk}

\n Then, it is known (\emph{cf.}\ \cite{B}) that for any $1\le r\le \infty$, there is a constant $K_r$ such that for all $B$-valued martingales $(M_n)$ in $L_r(B)$, we have
\[
 (K_r)^{-1}\|S(M)\|_r \le \|M^*\|_r \le K_r\|S(M)\|_r.
\]
It was observed in \cite{Bo} (\emph{cf.}\ also \cite{RF} for more information) that the spaces $C_q$ are all UMD spaces when $1<q<\infty$. Thus, we can reformulate Theorems~\ref{rans3thm} and \ref{rans4thm} in a martingale setting as follows. Given a martingale $(M_n)$ with values in $C_q$, for any $\omega$ in our probability space, let $x_n = dM_n(\omega)$ and let us denote if $2\le q<\infty$ (resp.\ $1\le p\le 2$) by $S_q(M)(\omega)$ (resp.\ $S_p(M)(\omega)$) the expression \eqref{rans6eq} (resp.\ \eqref{rans8eq}). Then, we can state for all $1\le r,q<\infty$, there is a constant $K_{r,q}$ such that for all $C_q$-valued martingales $(M_n)$, we have
\[
 (K_{r,q})^{-1}\|S_q(M)\|_r \le \|M^*\|_r \le K_{r,q}\|S_q(M)\|_r.
\]

Since it is well known that $C_1$ can be identified with $\ell_2\widehat\otimes \ell_2$ it is natural to try to extend the case $p=1$ of Theorem~\ref{rans4thm} to more general projective tensor products. Indeed, we have

\begin{thm}\label{rans5thm}
Assume that $E,F$ are both Banach spaces of type 2 with the analytic UMD property. Let $B = E\widehat\otimes F$ and let $x_n \in E\widehat\otimes F$. Then the series \eqref{rans1eq} or \eqref{rans2eq} converges a.s.\ iff there are positive scalars $\lambda_m$ with $\sum \lambda_m < \infty$ and elements $y^m_n \in E$, $z^m\in F$, $y^m\in E$, $z^m_n\in F$ such that
\begin{equation}\label{rans11eq}
 x_n = \sum^\infty_{m=1} \lambda_m[y^m_n \otimes z^m + y^m \otimes z^m_n]
\end{equation}
such that
\[
 \sup_m \left\|\sum_n \epsilon_ny^m_n\right\|_{L_2(E)} \le 1, \quad \sup_n \left\|\sum_n \epsilon_nz^m_n\right\|_{L_2(F)} \le 1, \quad \sup_m\|y^m\|_E \le 1, \sup_m\|z^m\|_F\le 1.
\]
Equivalently, we have the isomorphic identification
\begin{equation}\label{rans12eq}
 R(E\widehat\otimes F) \approx R(E)\widehat\otimes F + E\widehat\otimes R(F),
\end{equation}
and the norm of $R(E\widehat\otimes F)$ is equivalent to the natural norm in the space $R(E) \widehat\otimes F + E\widehat\otimes R(F)$ which is defined as
\[
 |||(x_n)||| = \inf\left\{\|(y_n)\|_{R(E)\widehat\otimes F} + \|(z_n)\|_{E\widehat\otimes R(F)}\right\}
\]
where the infimum runs over all decompositions $x_n = y_n+z_n$. $($Another trivially equivalent norm can be defined as the infimum of $\sum \lambda_m$ over all possible representations as in \eqref{rans11eq} above.$)$
\end{thm}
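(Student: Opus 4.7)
The plan is to reformulate \eqref{rans12eq} as a statement about Hardy functions on $\Delta = \pmb{T}^{\bb{N}}$ valued in $E \widehat\otimes F$ and then to deduce the desired decomposition from an analytic factorization of Sarason--Helson--Lowdenslager type in the vector-valued form developed in \cite{P3}.

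The easy inclusion $R(E) \widehat\otimes F + E \widehat\otimes R(F) \hookrightarrow R(E \widehat\otimes F)$ I would handle first and directly. Given a representative $\sum_m \lambda_m (y_n^m)_n \otimes z^m$ of $R(E) \widehat\otimes F$ with $\sum_m \lambda_m < \infty$, $\sup_m \|(y_n^m)\|_{R(E)} \le 1$, $\sup_m \|z^m\|_F \le 1$, the associated sequence $x_n = \sum_m \lambda_m\, y_n^m \otimes z^m$ satisfies, by Minkowski's integral inequality and the projective cross--norm property,
\[
\Bigl\|\sum_n \vp_n x_n\Bigr\|_{L_2(E \widehat\otimes F)} \le \sum_m \lambda_m \Bigl\|\sum_n \vp_n y_n^m\Bigr\|_{L_2(E)} \|z^m\|_F \le \sum_m \lambda_m,
\]
and symmetrically for the $E \widehat\otimes R(F)$ piece; this gives one direction in \eqref{rans12eq}.

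For the hard direction, given $(x_n) \in R(E \widehat\otimes F)$, I would introduce the Hardy function $g(t) := \sum_{n \ge 0} e^{i t_n} x_n$, lying in $H^2(\Delta, E \widehat\otimes F)$ with martingale differences $d_n g(t) = e^{i t_n} x_n$ and $\|g\|_{L_2(\Delta, E \widehat\otimes F)} \sim \|(x_n)\|_{R(E \widehat\otimes F)}$ by the standard Steinhaus/Rademacher comparison. The key ingredient is then the analytic factorization from \cite{P3} (\emph{cf.}\ also \cite{HP,P2,P4}), whose hypotheses exactly match the present ones; what I want to extract is an isomorphic identification
\[
H^2(\Delta, E \widehat\otimes F) \approx H^2(\Delta, E) \widehat\otimes F + E \widehat\otimes H^2(\Delta, F),
\]
i.e.\ every such $g$ splits as $g = g_1 + g_2$ with $g_1 \in H^2(\Delta, E) \widehat\otimes F$ and $g_2 \in E \widehat\otimes H^2(\Delta, F)$. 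Writing $g_1 = \sum_m \lambda_m G_m \otimes z^m$ and $g_2 = \sum_m \lambda_m y^m \otimes H_m$ with the natural norm controls, and extracting the $e^{it_n}$--Fourier coefficient of $d_n g_1, d_n g_2$ in the $n$--th torus variable, I would read off a decomposition $x_n = y_n + z_n$ with $y_n = \sum_m \lambda_m y_n^m \otimes z^m \in R(E) \widehat\otimes F$ and $z_n = \sum_m \lambda_m y^m \otimes z_n^m \in E \widehat\otimes R(F)$, precisely of the form \eqref{rans11eq}.

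The main obstacle is the factorization step. In the model case $E = F = \ell_2$ it reduces to Sarason--Wiener--Masani: every $C_1 = \ell_2 \widehat\otimes \ell_2$--valued Hardy function factors as a product of two $C_2$--valued Hardy functions. Extending this to $E \widehat\otimes F$ when neither factor is Hilbertian requires the vector--valued Lowdenslager construction of \cite{P3}; type 2 enables the control of the quadratic sums arising from square--rooting positive operator--valued analytic kernels, while analytic UMD guarantees unconditionality of the resulting Hardy martingale expansions through \eqref{rans9eq}--\eqref{rans10eq}. Producing a genuinely \emph{bilateral} splitting living in the sum $H^2(\Delta, E) \widehat\otimes F + E \widehat\otimes H^2(\Delta, F)$, rather than merely some operator--valued factorization on one side, is the crux, and is what forces the symmetric hypotheses on both $E$ and $F$.
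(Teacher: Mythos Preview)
Your easy direction and the passage to the Hardy function $g(t)=\sum_n e^{it_n}x_n$ are fine, but the ``key ingredient'' you invoke is not what the references supply, and this is a genuine gap. What \cite{P3} (restated here as Theorem~\ref{rans6thm}) gives is a \emph{multiplicative} factorization: the pointwise-product map
\[
 Q_{E,F}\colon\ H^2(\Delta,E)\ \widehat\otimes\ H^2(\Delta,F)\ \longrightarrow\ H^1(\Delta,E\widehat\otimes F)
\]
is onto, i.e.\ $f=\sum_m g_m(\cdot)\otimes h_m(\cdot)$ with both $g_m$ and $h_m$ Hardy functions. It does \emph{not} give an additive splitting $H^2(\Delta,E\widehat\otimes F)\approx H^2(\Delta,E)\widehat\otimes F+E\widehat\otimes H^2(\Delta,F)$ in which one tensor factor is constant in $t$; that identity is not in \cite{P3,P4,HP}, and I do not see why it should hold. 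Note also that the factorization lands in $H^1$, not $H^2$, and uses only the type~2 hypothesis; analytic UMD plays no role at this stage.

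The step you are missing is the Leibniz-type computation that turns the multiplicative factorization into the additive one at the level of the coefficients $x_n$. From $f=\sum_m g_m\otimes h_m$ one has
\[
 x_n=\int_\Delta e^{-it_n}f\,dm=\sum_m\int_\Delta e^{-it_n}\bigl[d_ng_m\otimes h_m+g_m\otimes d_nh_m\bigr]\,dm,
\]
because the $e^{it_n}$-coefficient of a product picks up one ``derivative'' on each factor. Only \emph{after} this does analytic UMD enter, via \eqref{rans10eq}, to bound $\bigl\|\sum_n\vp_n e^{-it_n}d_ng_m\bigr\|_{L_2(\Delta;R(E))}$ by $2K(E)\|g_m\|_{H^2(\Delta,E)}$ (and symmetrically for $h_m$). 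Integrating over $t\in\Delta$ then lands the two pieces in $R(E)\widehat\otimes F$ and $E\widehat\otimes R(F)$ with the required norm control. In short: factorize multiplicatively (type~2), differentiate the product (Leibniz), and only then unconditionalize the increments (analytic UMD). Your proposal collapses the first two steps into an additive $H^2$-factorization that is not available.
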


\begin{rk}
The sum in \eqref{rans12eq} is \emph{not} a direct sum. The meaning of \eqref{rans12eq} is that the three spaces appearing in \eqref{rans9eq} are all naturally continuously injected into the set of all sequences of elements of $E\widehat\otimes F$ and a sequence $(x_n)$ comes from $R(E\widehat\otimes F)$ iff it can be decomposed as the sum of a sequence in $R(E)\widehat\otimes F$ and a sequence in $E\widehat\otimes R(F)$.
\end{rk}

For example, if we take $E=F=\ell_2$, then $R(E) \approx \ell_2(E)$ and $R(F)\approx \ell_2(F)$, and one can check rather easily that $\|(y_n)\|_{\ell_2(E)\widehat\otimes F}$ (resp.\ $\|(z_n)\|_{E\widehat\otimes \ell_2(F)}$) can be identified with
\[
 \left\|\left(\sum y^*_ny_n\right)^{1/2}\right\|_{E\widehat\otimes F} \left(\text{resp. } \left\|\left(\sum z_nz^*_n\right)^{1/2}\right\|_{E\widehat\otimes F}\right).
\]
Thus, Theorem~\ref{rans5thm} extends the case $p=1$ of Theorem~\ref{rans4thm}.

The proof of Theorem~\ref{rans5thm} is based on a factorization property of functions of $H^1(\Delta, E\widehat\otimes F)$ as a convex hull of tensor products of bounded sets of functions in $H^2(E)$ and $H^2(F)$. From the viewpoint of Harmonic Analysis the space $H^p(\Delta,B)$ is nothing but the $B$-valued case of the $H^p$-space associated to the compact group $\Delta$ with its dual group ${\bb Z}^{\bb N}$ ordered by the lexicographical order, as is explained for instance in the chapter devoted to compact groups with ordered duals in \cite{R}. It is easy to see that if $g,h$ are complex valued functions in $H^2(\Delta)$, then the pointwise product $gh$ is in $H^1(\Delta)$  with $\|gh\|_1\le \|g\|_2\|h\|_2$. Since the methods described in Rudin's book, (which are due to Helson--Lowdenslager) extend to the matrix valued case (\emph{cf.}\ \cite{HL}) they allow us to trivially modify the proofs of the Appendix~B in the paper \cite{P3} (\emph{cf.}\ also \cite{P4}) to obtain the following statement (which could alternatively be phrased as a factorization of $H^1$-functions with values in the space of all nuclear operators from $E^*$ into $F$).

\begin{thm}\label{rans6thm}
Let $E,F$ be Banach spaces for type 2. Then the natural product mapping
\[
 H^2(\Delta) \times H^2(\Delta)\to H^1(\Delta)
\]
defines canonically a $\underline{\text{surjective}}$ norm one mapping
\[
 Q_{E,F}\colon \ H^2(\Delta,E) \widehat\otimes H^2(\Delta,F)\to H^1(\Delta,E\widehat\otimes F).
\]
More explicitly, there is a constant $C$ such that for any $f$ in $H^1(\Delta, E\widehat\otimes F)$ there are functions $g_m\in H^2(\Delta,E)$ and $h_m\in H^2(\Delta,F)$ such that
\begin{equation}
 f(z) = \sum^\infty_1 g_m(z) \otimes h_m(z) \tag*{$\forall z\in \Delta$}
\end{equation}
and
\begin{equation}\label{rans13eq}
 \sum\|g_m\|_{H^2(\Delta,E)} \|h_m\|_{H^2(\Delta,F)} \le C\|f\|_{H^1(\Delta,E\widehat\otimes F)}.
\end{equation}
\end{thm}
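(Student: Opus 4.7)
The contractivity half---that $Q_{E,F}$ is well defined with norm at most one---is immediate from the definition of the projective norm. For $g\in H^2(\Delta,E)$ and $h\in H^2(\Delta,F)$, the pointwise tensor product $z\mapsto g(z)\otimes h(z)$ satisfies $\|g(z)\otimes h(z)\|_{E\widehat\otimes F}\le\|g(z)\|_E\|h(z)\|_F$, and Cauchy--Schwarz gives $\|gh\|_{L^1(\Delta,E\widehat\otimes F)}\le\|g\|_{H^2(\Delta,E)}\|h\|_{H^2(\Delta,F)}$. The resulting function remains Hardy: products of functions each analytic in $t_n$ are still analytic in $t_n$, so the Hardy martingale condition passes through the tensor product. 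Extending by linearity and density yields a norm-one linear map from $H^2(\Delta,E)\widehat\otimes H^2(\Delta,F)$ into $H^1(\Delta,E\widehat\otimes F)$.

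The surjectivity together with the estimate \eqref{rans13eq} is the substantive content, and I would follow the strategy of Appendix~B of \cite{P3}. Given $f\in H^1(\Delta,E\widehat\otimes F)$ of norm one, the target is a representation $f=\sum_m g_m\otimes h_m$ with $g_m\in H^2(\Delta,E)$, $h_m\in H^2(\Delta,F)$, and $\sum\|g_m\|_{H^2(E)}\|h_m\|_{H^2(F)}\le C$. The construction exploits the Hardy martingale structure of $\Delta$: because the difference $E_nf-E_{n-1}f$ is analytic in the last coordinate $t_n$ for fixed $(t_0,\ldots,t_{n-1})$, the problem decomposes into a sequence of one-variable factorization problems for $E\widehat\otimes F$-valued $H^1$ functions on $\pmb{T}$. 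For each such slice, the matrix-valued Helson--Lowdenslager theorem \cite{HL} produces an outer factorization of the operator-valued spectral density, yielding analytic factors on a Hilbertian level. Passing from this Hilbertian factorization to one whose factors lie in $H^2(\Delta,E)$ and $H^2(\Delta,F)$ is where the type~2 hypothesis on $E$ and $F$ enters: type~2 supplies exactly the quadratic control needed to bound the projective norm of a sum of tensors by the $L^2$-norms of the individual factors, which matches the $L^2\times L^2\to L^1$ nature of scalar Hardy factorization.

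The main obstacle is to carry out this iteration over coordinates while keeping the total constant bounded independently of the number of coordinates and of any finite-dimensional approximation of $E$ and $F$. This uniformity is precisely the technical heart of Appendix~B of \cite{P3}, where the type~2 constants enter only once globally rather than accumulating through the iteration. The ``trivial modification'' referenced in the excerpt consists in substituting $E\widehat\otimes F$-valued Hardy spaces for whichever specific type~2 setting is treated in \cite{P3}, a substitution that affects the notation but not the structure of the argument; granted this, the statement of the theorem follows.
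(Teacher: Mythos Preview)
The paper does not actually prove Theorem~\ref{rans6thm}; the paragraph preceding it says only that the Helson--Lowdenslager methods for compact groups with ordered duals (as in Rudin's book) carry over to the matrix-valued case and allow one to ``trivially modify the proofs of the Appendix~B in the paper \cite{P3}.'' Your proposal is aligned with this: you correctly identify the norm-one half as immediate, and for surjectivity you point to the same source and the same two ingredients (an outer factorization \`a la Helson--Lowdenslager, plus the type~2 hypothesis to pass from the Hilbertian level to $E$ and $F$).

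One small correction of emphasis: your description has the argument ``decompose into a sequence of one-variable factorization problems'' on $\pmb{T}$, iterated over the coordinates of $\Delta$, with the concern of controlling constants through the iteration. That is not how the paper (or \cite{P3}) organizes things. The point of invoking Rudin's chapter on compact abelian groups with ordered duals is that $\Delta=\pmb{T}^{\bb N}$ with the lexicographic order on ${\bb Z}^{({\bb N})}$ admits a Szeg\H{o}/Helson--Lowdenslager outer factorization \emph{directly}, in one step, for (matrix-valued) $H^1$ functions on $\Delta$. There is no coordinate-by-coordinate induction and hence no accumulation of constants to worry about; the type~2 constants of $E$ and $F$ enter exactly once when upgrading the Hilbertian outer factor to factors in $H^2(\Delta,E)$ and $H^2(\Delta,F)$. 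With that adjustment your sketch matches the paper's intended route.
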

 
Using this result, it is not hard to complete the
\begin{proof}[Proof of Theorem \ref{rans5thm}]
We first note that there is obviously a norm one inclusion
\[
 R(E)\widehat\otimes F + E\widehat\otimes R(F)\to R(E\widehat\otimes F).
\]
Equivalently, for every sequence $(x_n)$ of the form \eqref{rans11eq}, the series $\sum \vp_nx_n$ is in $R(E\widehat\otimes F)$ with norm $\|\sum \vp_nx_n\|_{R(E\widehat\otimes F)} \le 2 \sum|\lambda_n|$. This is immediate from the definitions. We will prove the converse assuming (as we clearly may) that $B$ is a complex Banach space. We will work with the complex version of the series \eqref{rans2eq}. (The corresponding series are sometimes called Steinhaus series.) Consider $x_n\in E\widehat\otimes F$ such that the series $S = \sum \vp_nx_n$ converges a.s.\ in $E\widehat\otimes F$ and consider the function $f\colon \ \Delta\to B$ defined by
\[
 f(t_0,t_1,\ldots) = \sum_{n\ge 0} e^{it_n}x_n.
\]
Then, it is easy to check (working with finite sums) that $f$ converges a.s.\ and that
\begin{equation}\label{rans14eq}
\int_\Delta \|f\| dm \le 2\|S\|_{L_1(E\widehat\otimes F)}.
\end{equation}
Moreover (since $f$ is a polynomial of degree one in each complex variable) $f$ is clearly in $H^1(\Delta,B)$. By Theorem~\ref{rans6thm}, we can find $g_m,h_m$ such that \eqref{rans13eq} holds.\\
Now let us denote
\[
 d_nf = E_nf-E_{n-1}f.
\]
A simple calculation shows that
\begin{align}
 x_n &= \int e^{-it_n} f(t_0,t_1,\ldots) dm(t)
= \sum_m \int e^{-it_n} g_m\otimes h_m dm(t)\\
&= \sum_m \int e^{-it_n} [d_ng_m\otimes h_m+g_m \otimes d_nh_m]dm(t)\label{rans15eq}. 
\end{align}

Let us set
\[
 y^m_n(t) = e^{-it_n}d_ng_m(t),\quad z^m(t)=h_m(t),\quad y^m(t) = g_m(t),\quad z^m_n(t) = e^{-it_n}d_nh_m(t).
\]

Then by \eqref{rans10eq} (the analytic UMD property) we have $\sum \vp_ny^m_n(t) \in L_2(\Delta, dm; R(E)$) with
\begin{equation}\label{rans16eq}
 \left(\int\left\|\sum \vp_ny^m_n(t)\right\|^2_{R(E)} dm(t)\right)^{1/2} \le 2K(E) \|g_m\|_{H^2(\Delta,E)},
\end{equation}
and $\sum\vp_nz^m_n(t) \in L_2(\Delta, dm; R(F))$ with
\begin{equation}\label{rans17eq}
 \left(\int\left\|\sum \vp_nz^m_n(t)\right\|^2_{R(F)} dm(t)\right)^{1/2} \le 2K(F) \|h_m\|_{H^2(\Delta,F)},
\end{equation}
therefore it follows from \eqref{rans15eq} that we have
\[
 \sum_n \vp_nx_n = \sum_n \int \left[\left(\sum_n \vp_ny^m_h(t)\right) \otimes z^m(t) + y^m(t) \otimes \left(\sum_n \vp_nz^m_n(t)\right)\right] dm(t)
\]
so that recalling \eqref{rans13eq}, \eqref{rans14eq}, \eqref{rans16eq} and \eqref{rans17eq} we find
\begin{align*}
 \left\|\sum \vp_nx_n\right\|_{R(E) \widehat\otimes F + E\widehat\otimes R(F)} &\le (2CK(E) + 2CK(F))\|f\|_{H^1(\Delta,E\widehat\otimes F)}\\
&\le 4(CK(E) + CK(F))\|S\|_{R(E\widehat\otimes F)}
\end{align*}
so that we obtain the announced converse inclusion
\[
 R(E\widehat\otimes F) \subset R(E)\widehat\otimes F + E\widehat\otimes R(F),
\]
and this concludes the proof.
\end{proof}

\n{\bf Final Remarks.}

\begin{itemize}
 \item[(i)] Note that Theorem \ref{rans5thm} applies in the case $E=L_p$, $F=L_q$ and $2\le p,q< \infty$. We do not know what happens for $1<p,q<2$ or even in the case $E=L_p$, $F=L_2$ for $1<p<2$.
\item[(ii)] Similarly it is easy to deduce from Theorem~\ref{rans5thm} that the space $L_p \widehat\otimes L_q$ is of cotype $\max(p,q)$ if $2\le p,q<\infty$. However it remains an open problem whether $L_p\widehat\otimes L_q$ is of finite cotype (cotype 2?) for $1<p<2$ and $1<q\le 2$. For the case $p=q=2$, the result goes back to \cite{TJ}.
\item[(iii)] The preceding Theorem~\ref{rans5thm} raises many other natural questions. One fascinating point is that \eqref{rans12eq} appears like a \emph{derivation} formula. It seems that $R$ operates on the tensor product exactly as a derivation does on a product of noncommuting objects.
\begin{quote}
 Therefore, it is natural to ask the following two questions:\\
{\bf Is there a similar result  for $R(E\widehat\otimes F\widehat\otimes G)$?}\\
The guess is that we should find the sum
\[
 R(E) \widehat\otimes F\widehat\otimes G + E\widehat\otimes R(F) \widehat\otimes G + E\widehat\otimes F\widehat\otimes R(G).
\]
The most interesting open case is the case $E=F=G=\ell_2$. It is a long standing open question whether
the space $E\widehat\otimes F\widehat\otimes G$ has finite cotype in this case. \\
We can also ask about ``second derivatives'':\\
{\bf Is there an analogous result for $R[R(E\widehat\otimes F)]$?}\\
Here the guess is that we should have a formula analogous to the second derivative. 
\end{quote}
We must distinguish the first $R$ (associated to a first sequence $(\epsilon^1_n)$) and the second one (associated to a second sequence $(\epsilon^2_n)$, independent of the first one) so let us denote them by $R_1$ and $R_2$ respectively. Then the guess is that we should have
\[
 R_1(R_2(E\widehat\otimes F)) \approx R_1(R_2(E)) \widehat\otimes F + R_2(E) \widehat\otimes R_1(F) + R_1(E)\widehat\otimes R_2(F) + E\widehat\otimes R_1(R_2(F)).
\]
Even in the case $E=F=\ell_2$, we could not check this.
\end{itemize}


\begin{thebibliography}{10}
\bibitem{Bo}
J. Bourgain, Vector valued singular integrals and the $H^1$---BMO duality, \emph{Probability Theory and Harmonic Analysis}, Chao and Woyczynski (eds.), Marcel Dekker, (1986), pp.~1--19.
\bibitem{B}
D. Burkholder, A geometric characterization of Banach spaces in which martingale difference sequences are unconditional, \emph{Ann. Probab.} {\bf 9} (1981), 997--1011.
\bibitem{C}
K. Carne, The algebra of bounded holomorphic margingales, \emph{J. Funct. Anal.} {\bf 45} (1982), 95--108.
\bibitem{E}
G. Edgar, Analytic martingale convergence, \emph{J. Funct. Anal.} {\bf 69} (1986), 268--280.
\bibitem{F}
X. Fernique, Int\'egrabilit\'e des vecteurs gaussiens, \emph{C.R. Acad. Sci. Paris A} {\bf 270} (1970), 1698--1699.
\bibitem{G}
D.J.H. Garling, On martingales with values in a complex Banach space, \emph{Proc. Cambridge Phil. Soc.} {\bf 104} (1988), 399--406.
\bibitem{HL}
H. Helson and D. Lowdenslager, Prediction theory and Fourier series in several variables, \emph{Acta Math.} {\bf 99} (1958), 165--202.
\bibitem{HP}
U. Haagerup and G. Pisier, Factorization of analytic functions with values in noncommutative $L_1$-spaces, \emph{Canadian J. Math.} {\bf 41} (1989), 882--906.
\bibitem{IN}
K. Ito and M. Nisio, On the convergence of sums of independent Banach space valued random variables, \emph{Osaka J. Math.} {\bf 5} (1968), 35--48.
\bibitem{K}
J.P. Kahane, Some random series of functions, (1968), \emph{Heath Mathematical Monographs}, Second Edition, Cambridge University Press, (1985).
\bibitem{Kw1}
S. Kwapie\'n, A theorem on Rademacher series with vector valued coefficients, in \emph{Probability in Banach Spaces}, Springer Lecture Notes, no.~526, (1976), 157--158.
\bibitem{Kw2}
S. Kwapie\'n, Isomorphic characterizations of inner product spaces by orthogonal series with vector coefficients, \emph{Studia Math.} {\bf 44} (1972), 583--595.
\bibitem{LS}
H. Landau and L. Shepp, On the supremum of a Gaussian process, \emph{Sankhya A} {\bf 32} (1970), 369--378.
\bibitem{LT}
J. Lindenstrauss and L. Tzafriri, \emph{Classical Banach Spaces II}, Springer Verlag, 1979.
\bibitem{LP1}
F. Lust-Piquard, In\'egalit\'es de Khintchine dans $C_p$, $(1<p<\infty)$, \emph{C.R. Acad. Sci. Paris} {\bf 303} (1986), 289--292.
\bibitem{LP2}
F. Lust-Piquard, A Grothendieck factorization theorem on $2$2-convex Schatten spaces.  
Israel J. Math. 79 (1992),   331Ð365. 
\bibitem{LPP}
F, Lust-Piquard and G. Pisier, Noncommutative Khintchine and Paley inequalities, \emph{Ark. Mat.} {\bf 29} (1991), no. 2, 241--260.
\bibitem{M}
B. Maurey, Type et cotype dans les espaces munis de structure locale inconditionnelle, \emph{S\'eminaire Maurey--Schwartz} 73-75, Expos\'e no.~24--25, Ecole Polytechnique, Paris.
\bibitem{MP} 
B. Maurey and G. Pisier, S\'eries de variables al\'eatoires vectorielles ind\'ependantes et propri\'et\'es g\'eom\'etriques des espaces de Banach, \emph{Studia Math.} {\bf 58} (1976), 45--90.
\bibitem{P1}
G. Pisier, Probabilistic methods in the geometry of Banach spaces, CIME Summer School, June 1985, \emph{Springer Lecture Notes}, no. 1206, (1986), 167--241.
\bibitem{P2}
G. Pisier, Factorization de fonctions analytiques \`a valeurs op\'erateurs, \emph{C.R. Acad. Sci. Paris} {\bf 307} (1988), 955--960.
\bibitem{P3}
G. Pisier, Factorization of operator valued analytic functions, \emph{Adv. Math.} {\bf 93} (1992),  61--125.
\bibitem{P4}
G. Pisier, Factorization of operator valued analytic functions and complex interpolation, \emph{Festschrift in honor of I. Piatetski-Shapiro, Part II} S. Gilbert, R. Howe and P. Sarnak (eds.), Weizmann Science Press of Israel (1990), pp.~197--220.

\bibitem{P5}
G. Pisier, Holomorphic semi-groups and the geometry of Banach spaces,
\emph{Ann. Math.} {\bf 115} (1982), 375--392
\bibitem{P6}
G. Pisier, The dual $J^*$ of the James space has cotype 2 and the
Gordon--Lewis property, \emph{Proc. Cambridge Phil. Soc.} {\bf 103}
(1988), 323--331.
\bibitem{PX}
G. Pisier and Q. Xu, Random series in the real interpolation spaces
between the spaces $V_p$, \emph{GAFA, Springer Lecture Notes}, no. 1267,
(1987), 185--209.
\bibitem{RF}
J.L. Rubio de Francia, Martingale and integral  transforms of Banach
space valued functions, \emph{Springer Lecture Notes}, no. 1221, (1985),
195--222.
\bibitem{R}
W. Rudin, \emph{Fourier Analysis on Group}, Interscience, New York, 1962.

\bibitem{S}
D. Sarason, Generalized interpolation in $H^\infty$, \emph{Trans. Amer.
Math. Soc.} {\bf 127} (1967), 179--203.
\bibitem{T}
M. Talagrand, Regularity of Gaussian processes, \emph{Acta Math.} {\bf
159} (1987), 99--149.
\bibitem{TJ}
N. Tomczak-Jaegermann, On the moduli of convexity and smoothness and the
Rademacher averages of the trace classes $S_p$ $(1\le p < \infty)$,
\emph{Studia Math.} {\bf 50} (1974), 163--182.
\bibitem{V1}
N. Varopoulos, The Helson-Szeg\"o theorem and $A_p$-functions for
Brownian motion and several variables, \emph{J. Funct. Anal.} {\bf 39}
(1980), 85--121.
\bibitem{V2}
N. Varopoulos, Probabilistic approach to some problems in complex
analysis, \emph{Bull. Sci. Math.} {\bf 105} (1981), 181--224.



 

\end{thebibliography}
\end{document}